\newtheorem{theorem}{Theorem}
\newtheorem{lemma}[theorem]{Lemma}
\newtheorem{claim}{Claim}
\theoremstyle{definition}
\begin{document}
\onehalfspace

\title{Degenerate Vertex Cuts in Sparse Graphs}
\author{Thilo Hartel \and Johannes Rauch \and Dieter Rautenbach}
\date{}
\maketitle
\begin{center}
{\small  
Institute of Optimization and Operations Research\\ 
Ulm University, Ulm, Germany\\
\texttt{thilo.hartel@uni-ulm.de\\
johannes.rauch@mailbox.org\\
dieter.rautenbach@uni-ulm.de}}
\end{center}

\begin{abstract}
For a non-negative integer $k$, 
a vertex cut in a graph is $k$-degenerate if it induces a $k$-degenerate subgraph.
We show that a graph of order $n$ at least $2k+2$ without a $k$-degenerate cut
has the size at least $\frac{1}{2}\left(k+\Omega\left(\sqrt{k}\right)\right)n$
and that a graph of order $n$ at least $5$ without a $2$-degenerate cut 
has the size at least $\frac{27n-35}{10}$.
For $k\geq 2$, 
we show that a connected graph $G$ of order $n$ at least $k+6$
and size $m$ at most $\frac{k+3}{2}n+\frac{k-1}{2}$
has a minimum $k$-degenerate cut.
\end{abstract}

\section{Introduction}

We consider finite, simple, and undirected graphs and use standard terminology.
A graph $G$ is {\it $k$-degenerate} for some non-negative integer $k$
if every non-empty subgraph $H$ of $G$ 
has a vertex $u$ of degree $d_H(u)$ at most $k$.
In particular, a graph is $0$-degenerate if it has no edges,
and $1$-degenerate if it is a forest.
A set $S$ of vertices of a graph $G$ is a {\it cut of $G$} 
if $G-S$ is disconnected.

In the present paper we study the existence of $k$-degenerate cuts in sparse graphs,
which are cuts inducing $k$-degenerate subgraphs. 
Answering a question of Caro,
it was shown by Chen and Yu \cite{chyu} that 
every graph of order $n$ with less than $2n-3$ edges 
has an {\it independent cut}, 
which is a $0$-degenerate cut. 
This seminal result with its elegant inductive proof led to considerable further research,
cf.~\cite{chfaja,chrarare,lepf,rara} and the references therein. 
At the 7th C5 Graph Theory Workshop (Kurort Rathen, 2003) \cite{in},
Atsushi Kaneko proposed the conjecture that 
every graph of order $n$ with less than $3n-6$ edges has a {\it forest cut},
which is a $1$-degenerate cut. 
This conjecture we rediscovered by Chernyshev et al.~\cite{chrara} 
who verified it for planar graphs and showed that 
every graph of order $n$ with less than $(11n-18)/5$ edges has a forest cut.
This bound was improved to $(9n-15)/4$ by Botler et al.~\cite{bocofefigosa} 
and to $(19n-28)/8$ for $n\geq 4$ by Bogdanov et al.~\cite {bonesovoruvo}
but the conjecture remains open.
Bogdanov et al.~also consider the existence of {\it bipartite cuts},
which are cutsets inducing a $2$-colorable subgraph.

Chen and Yu's result and Kaneko's conjecture suggest the conjecture that 
every graph of sufficiently large order $n$ 
with less than 
$$(k+2)n-{k+3\choose 2}$$ 
edges has a $k$-degenerate cut,
which would be best possible in view of the join of
$K_{k+2}$ and $\overline{K}_{n-k-2}$.
Unfortunately, this naive conjecture turns out to be quite false.
Modifying a construction of Bessy et al.~\cite{beraraso}
based on results of Axenovich et al.~\cite{axsesnwe},
Aubian et al.~\cite{aubobofopi} construct graphs of 
arbitrarily large order $n$ 
and less than 
$$\frac{1}{2}\left(k+O\left(\sqrt{k\ln(k)}\right)\right)n$$ 
edges
that do not have a $k$-degenerate cut
or even a cut inducing a $(k+1)$-colorable subgraph.
Note that a graph $G$ of order at least $k+2$ 
that does not have a $k$-degenerate cut
has minimum degree at least $k+2$, 
because, otherwise, 
the neighborhood $N_G(u)$ of a vertex $u$ of minimum degree is a $k$-degenerate cut.
This trivial argument already implies that graphs of sufficiently large order $n$
that do not have a $k$-degenerate cut
have at least $\frac{1}{2}\left(k+2\right)n$ edges.

Our first contribution is the following result.

\begin{theorem}\label{thm:size}
Let $k$ be a positive integer.
If $G$ is a graph of order $n$ at least $2k+2$ without a $k$-degenerate cut, 
then the size of $G$ is at least 
$\frac{1}{2}\left(k+\frac{13\sqrt{k}}{190}-\frac{1}{38}\right)n$.
\end{theorem}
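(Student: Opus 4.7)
The first observation is that any graph without a $k$-degenerate cut must be connected (otherwise the empty set serves as such a cut), and since $n\geq 2k+2$, for every non-universal vertex $v$ the set $N_G(v)$ separates $v$ from at least one other vertex, hence is a cut. Since this cut is not $k$-degenerate, $G[N_G(v)]$ must contain a non-empty subgraph of minimum degree at least $k+1$, which has at least $k+2$ vertices and therefore at least $\binom{k+2}{2}$ edges. In particular every vertex has degree at least $k+2$, giving the trivial bound $2m\geq (k+2)n$. The gain we seek over this bound is only meaningful when $\tfrac{13\sqrt{k}}{190}-\tfrac{1}{38}>2$, i.e.\ when $k$ is large, so the real work is asymptotic.

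My plan is a dichotomy on the minimum degree $\delta$. If $\delta\geq k+\tfrac{13\sqrt{k}}{190}-\tfrac{1}{38}$, the trivial bound $2m\geq\delta n$ already suffices. Otherwise pick a vertex $v$ of minimum degree and analyse the $(k+1)$-core $H_v\subseteq N_G(v)$, which has size $h\geq k+2$. The idea is: for a vertex subset $Y\subseteq N_G(v)$ such that $G[N_G(v)\setminus Y]$ is $k$-degenerate, the set $S=N_G(v)\setminus Y$ is a $k$-degenerate cut unless $G-S$ is connected, and connectivity forces the vertices in $Y$ to carry the burden of bridging $\{v\}\cup Y$ to each component of $G[V\setminus N_G[v]]$. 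For the special $Y=\{w\}$ with $w$ in a \emph{minimal} subgraph of $G[N_G(v)]$ of minimum degree $k+1$ (such a subgraph exists and coincides with $K_{k+2}$ when $d(v)=k+2$), one directly concludes $d(w)\geq k+3$. Aggregating over a minimum-degree vertex yields $|V_0|$-many edges beyond the baseline, where $V_0=\{u:d(u)=k+2\}$; since $V_0$ is independent (its vertices are mutually non-adjacent by the $K_{k+2}$-structure of their neighbourhoods) and each $v\in V_{\geq 1}$ satisfies an inequality of the form $|N_G(v)\cap V_0|\leq d(v)-(k+1)$, comparing the two sides of the edge count between $V_0$ and $V\setminus V_0$ gives an inequality constraining $|V_0|$ in terms of $2m$.

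Combining this linearly with $2m\geq (k+2)|V_0|+(k+3)(n-|V_0|)$ and then optimizing between this and the easy case $\delta\geq k+\tfrac{13\sqrt{k}}{190}-\tfrac{1}{38}$ is where the precise constants $13/190$ and $1/38$ emerge: they balance the guaranteed excess from low-degree vertices against the direct contribution $\delta n/2$ in the complementary regime.

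The main obstacle is making the cut argument robust when the $(k+1)$-core $H_v$ is strictly larger than $K_{k+2}$: then removing a single vertex of $H_v$ does not destroy the $(k+1)$-core, so to reach a $k$-degenerate induced subgraph one has to remove a larger set $Y$, weakening the local conclusion from ``every $w\in Y$ has an outside neighbour'' to ``some $w\in Y$ has an outside neighbour''. A second subtlety is that $G[V\setminus N_G[v]]$ may be disconnected, in which case $Y$ needs an outside neighbour in each of its components; this actually helps (forces higher degrees) but must be tracked carefully. Resolving both points requires a finer case analysis inside the main case, and the arithmetic of that analysis is what produces the particular rational constants appearing in the theorem.
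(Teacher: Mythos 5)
There is a genuine quantitative gap in your plan: the mechanism you describe cannot produce the $\Omega(\sqrt{k})$ excess that the theorem requires. Your "hard case" extracts from a minimum-degree vertex $v$ the conclusion that certain neighbours $w$ satisfy $d(w)\geq k+3$, and your final bookkeeping is of the form $2m\geq (k+2)|V_0|+(k+3)(n-|V_0|)$ together with a bound on $|V_0|$. Any inequality of this shape is at best $2m\geq (k+3)n$, i.e.\ an excess of $O(1)$ per vertex, whereas the claimed bound needs an average excess of order $\sqrt{k}$ per vertex (the term $\frac{13\sqrt{k}}{190}$ dominates the constant for large $k$, which is exactly the regime you correctly identify as the real work). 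No amount of "balancing" between your two cases can manufacture a $\sqrt{k}$ term that appears in neither branch: in the branch $\delta<k+\frac{13\sqrt{k}}{190}-\frac{1}{38}$ you would need to show that the low-degree vertices are compensated by neighbours whose degrees exceed $k$ by $\Theta(\sqrt{k})$, not by $1$.

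The paper's proof supplies precisely this missing ingredient. For a vertex $u$ of degree at most $k+\frac{\sqrt{k}}{5}$, it chooses a set $X$ of about $\frac{2\sqrt{k}}{5}$ neighbours of $u$ \emph{minimizing} the number of edges from $X$ to $R=V(G)\setminus N_G[u]$, and sets $Y=N_G(X)\setminus N_G[u]$, $Z=N_G(u)\setminus X$. Either some vertex of $X$ has at least $\frac{\sqrt{k}}{5}$ neighbours in $R$ (then by minimality all of $Z$ does), or $|Y|<\frac{2k}{25}$; in the latter case $Y\cup Z$ is a cut (it separates $\{u\}\cup X$ from the rest, and the order count works out), so it contains a $k$-core, and since $|Z|\leq k-\frac{\sqrt{k}}{5}+1$ every core vertex in $Z$ must have at least $\frac{\sqrt{k}}{5}$ neighbours in $Y\subseteq R$. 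Either way almost all of $N_G(u)$ consists of vertices with $k+1$ neighbours inside $N_G(u)$, the neighbour $u$, and $\frac{\sqrt{k}}{5}$ further neighbours in $R$, hence degree at least $k+\frac{\sqrt{k}}{5}$; a discharging step with charge $\frac{5}{38\sqrt{k}}$ per edge then spreads this excess and yields the stated constants. Note also that the cut the paper uses is $Y\cup Z$, which is \emph{always} a cut, avoiding the obstacle you flag about $N_G(v)\setminus Y$ possibly failing to separate; this is the structural trick you would need to adopt. Your observations that the minimum degree is at least $k+2$ and that $V_0$ is independent are correct but do not lead to the bound.
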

The proof of Theorem \ref{thm:size} only considers vertices 
at distance at most $2$ from small degree vertices,
that is, it relies on a relatively local argument.
Nevertheless, the results of Aubian et al.~\cite{aubobofopi} 
imply that Theorem \ref{thm:size}
is best possible up to a factor of order $\sqrt{\ln(k)}$
at the $\sqrt{k}$-term.

For $k=2$, we improve Theorem \ref{thm:size} as follows.

\begin{theorem}\label{thm2}
If $G$ is a graph of order $n$ at least $5$ without a $2$-degenerate cut, 
then the size of $G$ is at least $\frac{27n-35}{10}$.
\end{theorem}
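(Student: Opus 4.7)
The plan is to combine the rigid local structure forced by the absence of a $2$-degenerate cut with a carefully tuned counting/discharging argument specific to $k=2$.

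First, I would establish the basic structural features. $G$ must be $4$-connected: if $G$ were disconnected, the empty set would be a $2$-degenerate cut, and any vertex cut of size at most $3$ induces a graph on at most $3$ vertices, which is trivially $2$-degenerate. Hence $\delta(G)\geq 4$. For $n\geq 7$ and any vertex $v$, $N(v)$ is a cut (since $v$ becomes isolated in $G-N(v)$), so $G[N(v)]$ cannot be $2$-degenerate. A degree-$4$ vertex $v$ then satisfies $G[N(v)]\cong K_4$ (the unique non-$2$-degenerate graph on $4$ vertices), whence $\{v\}\cup N(v)$ induces a copy of $K_5$. Moreover, using $4$-connectivity, at most one vertex of any such $K_5$-block can have all its edges inside the block (otherwise the remaining at most $3$ boundary vertices form a separator of $G$), so each such $K_5$-block contains a unique degree-$4$ vertex and four vertices of degree at least $5$. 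A degree-$5$ vertex $v$ has $G[N(v)]$ containing a subgraph of minimum degree at least $3$, which is either a $K_4$ (again placing $v$ in a $K_5$-block) or spans the whole neighborhood (forcing every neighbor of $v$ to have at least three other neighbors inside $N(v)$). The small cases $n\in\{5,6\}$ are handled by hand; in particular, $K_5$ is extremal and accounts for the constant term in the bound.

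Next, I would run a discharging argument. Assign each vertex $v$ the initial charge $c(v)=d(v)-\frac{27}{5}$, so that $\sum_v c(v)=2m-\frac{27n}{5}$ and the theorem is equivalent to $\sum_v c(v)\geq -7$. Initial charges are $-\frac{7}{5}$ at degree-$4$ vertices, $-\frac{2}{5}$ at degree-$5$ vertices, and at least $\frac{3}{5}$ at vertices of degree at least $6$. The discharging rule transfers $\frac{7}{20}$ along each edge from a vertex of degree $\geq 5$ to an adjacent degree-$4$ vertex, lifting each degree-$4$ vertex exactly to charge $0$. An analogous rule pulls the remaining deficit of each degree-$5$ vertex from the dense part of its neighborhood guaranteed by the structural lemma. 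The constants are calibrated so that the only globally uncompensated deficit occurs when the whole graph is a single $K_5$: there the total deficit is $5\cdot(-\tfrac{7}{5})=-7$, exactly the slack in the stated bound.

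The main obstacle is to verify that no vertex is driven into deficit by excessive outgoing donations. A vertex $v$ of degree $\geq 5$ may sit in several $K_5$-blocks and thus be adjacent to several degree-$4$ vertices; for instance, a degree-$6$ vertex adjacent to two degree-$4$ vertices would, under the naive uniform rule, donate $\tfrac{7}{10}>\tfrac{3}{5}$ and become deficient. Here $4$-connectivity and the prohibition on two degree-$4$ vertices lying in a common $K_5$ are essential: they constrain how $K_5$-blocks can meet at $v$ (each consumes four of $v$'s edges and can share only the degree-$\geq 5$ members), and the dense structure around each degree-$5$ neighbor of $v$ forces extra edges in the $2$-neighborhood of $v$ that the count can exploit. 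A case analysis on the degree of $v$, the number of $K_5$-blocks through $v$, and the number of degree-$5$ vertices whose dense neighborhood contains $v$ shows that in every configuration the outgoing charge from $v$ is absorbed by its surplus, possibly after a further redistribution among degree-$\geq 6$ vertices whose $2$-neighborhoods contain extra edges. Summing the final charges yields $2m\geq\frac{27n}{5}-7$, i.e., $m\geq\frac{27n-35}{10}$, with equality only for $G=K_5$.
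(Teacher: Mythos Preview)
Your outline has a genuine gap at the very first structural step, and the discharging collapses because of it.

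You establish only $\kappa(G)\geq 4$, and then try to handle degree-$4$ vertices directly via the $K_5$-block analysis. But without further input this is not enough. Consider $G=K_4+\overline{K_3}$ (a $K_4$ together with three apex vertices, each joined to all of the $K_4$). This graph has $n=7$, no $2$-degenerate cut, and each vertex of the base $K_4$ has degree $6$ yet is adjacent to \emph{three} degree-$4$ vertices; under your rule it must donate $3\cdot\frac{7}{20}=\frac{21}{20}$ while carrying only $\frac{3}{5}$ of surplus. Your final paragraph claims a ``case analysis'' and ``further redistribution'' handle such configurations, but nothing is actually proved there, and this example shows that the sharing constraints you invoke (each $K_5$-block ``consumes four of $v$'s edges'') do not prevent a degree-$6$ vertex from sitting in three $K_5$-blocks simultaneously. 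The rule for lifting degree-$5$ vertices is likewise never specified, so the whole discharging is a promissory note rather than an argument.

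What the paper does, and what you are missing, is an induction (minimal counterexample) step that eliminates degree-$4$ vertices entirely. If $G$ had a cut $S$ of order $4$, then $G[S]\cong K_4$; writing $G_1=G[V(K)\cup S]$ and $G_2=G-V(K)$ for a component $K$ of $G-S$, both $G_i$ inherit the property of having no $2$-degenerate cut (because $S$ is complete), and the two induced bounds sum to more than $\frac{27n-35}{10}$. Hence a minimal counterexample has $\kappa\geq 5$ and $\delta\geq 5$. With $\delta=5$ the paper then proves the sharp local fact that for any degree-$5$ vertex $u$ one has $\sum_{v\in N(u)}d_G(v)\geq 29$, via a case analysis that repeatedly exhibits either a cut of order $4$ or an explicit $2$-degenerate cut of order $5$ built from $N(u)$ and one or two vertices at distance $2$. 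This inequality is exactly what drives a clean discharging: vertices of degree $i\in\{6,7,8\}$ send $\frac{i-5}{10}$ to each neighbour and vertices of degree $\geq 9$ send $\frac{4}{10}$, so every degree-$5$ vertex receives at least $\frac{4}{10}$ and every vertex ends with charge at least $\frac{27}{5}$. Your structural lemma for degree-$5$ vertices (that $G[N(v)]$ has minimum degree $\geq 3$) is strictly weaker and does not by itself yield the needed lower bound on the neighbours' degrees.
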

The old and new results discussed so far concerned $k$-degenerate cuts
of arbitrary order. 
Already Chen et al.~\cite{chfaja} study 
under which conditions there are small independent cuts.
In particular, for $s\leq 3$ and any positive integer $n$,  
they determine the largest value of $m$ 
--- up to a small additive error for $s=3$ ---
such that a graph of order $n$ and size at most $m$
has an independent cut of order at most $s$.
Recently, 
Cheng et al.~\cite{chtazh} consider the existence of minimum cuts 
that are $k$-degenerate for $k\in \{ 0,1\}$.
In particular, they show 
that every connected graph of order $n\geq 7$
and size at most $\left\lfloor\frac{3n}{2}\right\rfloor$
has an independent minimum cut, and
that every connected graph of order $n\geq 7$
and size at most $2n$
has a minimum cut that induces a forest.
In both cases, the bounds on the size are best possible.

Our third result concerns the existence of minimum cuts that are $k$-degenerate.

\begin{theorem}\label{thmmin}
Let $k$ be an integer at least $2$.
If $G$ is a connected graph of order $n$ at least $k+6$
and size $m$ at most $\frac{k+3}{2}n+\frac{k-1}{2}$,
then $G$ has a minimum $k$-degenerate cut.
\end{theorem}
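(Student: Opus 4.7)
The plan is to split on the connectivity $\kappa = \kappa(G)$. If $\kappa \leq k+1$, every minimum vertex cut has at most $k+1$ vertices, and any graph on at most $k+1$ vertices is trivially $k$-degenerate, so we are done. Henceforth assume $\kappa \geq k+2$; then $\delta(G) \geq \kappa \geq k+2$, and the hypothesis yields
$\sum_v \bigl(d_G(v)-(k+2)\bigr) = 2m - (k+2)n \leq n+k-1$,
so the total excess degree over $k+2$ is tightly budgeted.

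Let $S$ be an arbitrary minimum vertex cut, let $A$ be a component of $G-S$, and $B = V(G)\setminus(S\cup A)$. A standard exchange argument (if $s \in S$ had no neighbor in $A$, then $S\setminus\{s\}$ would still be a cut) shows that every $s\in S$ has a neighbor in $A$ and a neighbor in $B$, so $d_G(s) \geq d_{G[S]}(s) + 2$. If $G[S]$ is $k$-degenerate, we are done; otherwise its $(k+1)$-core $T \subseteq S$ is non-empty with $|T|\geq k+2$, and every $v\in T$ satisfies $d_G(v) \geq (k+1)+2 = k+3$, contributing at least $|T|$ units to the excess-degree budget.

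The crux is the case $\kappa = k+2$: since $|T|\geq k+2 = |S|$ forces $T = S$ and $G[S]\cong K_{k+2}$, every vertex of $S$ has degree at least $k+3$ in $G$. I would then inspect a vertex $u$ of minimum degree: if $d_G(u)=k+2$, then $N_G(u)$ is itself a minimum cut, and either $G[N_G(u)]$ is $k$-degenerate (done), or $\{u\}\cup N_G(u)$ induces a $K_{k+3}$. Collecting excess-degree contributions from (i) the $k+2$ vertices of $S$, (ii) every vertex lying in some obstructing $K_{k+3}$ around a minimum-degree vertex, and (iii) the edges forced by the connectedness of $G$ between these cliques and the rest of $G$, I aim to show that the total excess strictly exceeds the slack $n+k-1$ once $n\geq k+6$. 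For $\kappa \geq k+3$, the slack shrinks to $\sum_v\bigl(d_G(v)-(k+3)\bigr)\leq k-1$, and a parallel but much tighter analysis closes the case quickly.

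The main obstacle is the subcase $\kappa=k+2$ in which \emph{every} minimum cut induces $K_{k+2}$: one must either swap a vertex of $S$ with one in $A\cup B$ to produce a different minimum cut whose induced subgraph ceases to be a clique, or accumulate enough forced excess edges to violate the hypothesis. Tracking the interaction between the $K_{k+2}$-structure of minimum cuts and the local $K_{k+3}$'s around minimum-degree vertices, while ensuring the sizes of $A$ and $B$ remain consistent with the lower bound $n\geq k+6$, is the technically most delicate step.
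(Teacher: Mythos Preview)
Your opening moves match the paper's: split on $\kappa$, dispose of $\kappa\le k+1$, observe that the average-degree bound forces $\kappa\le k+3$, and note that a minimum cut $S$ that is not $k$-degenerate must contain a $k$-core of size $\ge k+2$. But from there the proposal stalls at exactly the point you flag as the ``main obstacle'', and the strategy you sketch for it will not close the gap.

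For $\kappa=k+2$ you propose to accumulate excess-degree contributions and force $\sum_v(d_G(v)-(k+2))>n+k-1$. That cannot work: the slack $n+k-1$ grows linearly in $n$, and the extremal construction (the cyclic arrangement of $(k+2)$-cliques with a pendant vertex) sits exactly one edge above the hypothesis with excess $n+k+1$; shaving a single edge lands you precisely on the budget while still leaving every minimum cut near a clique. So no pure counting argument over the threshold $k+2$ can produce a contradiction---you must actually \emph{exhibit} a $k$-degenerate minimum cut. Your alternative, a one-step swap $S\leadsto (S\setminus\{s\})\cup\{w\}$, is the right instinct but you give no mechanism to guarantee the swapped set stays a \emph{minimum} cut and ceases to be a clique; in the $\kappa=k+2$ regime there is no tight degree bound on vertices of $A\cup B$ to control $d_S(w)$.

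The paper's device, which your plan is missing entirely, is \emph{induction on $n$}. One first isolates the case $\delta(G)=k+3$ as a lemma and handles it by explicit swap arguments (this is your $\kappa=k+3$ case, and it is not quite as quick as you suggest: three subcases are needed depending on whether $S$ itself is a $k$-core). This lemma then forces $\delta(G)=\kappa(G)=k+2$ in any minimal counterexample. Now take a vertex $v$ of degree $k+2$; since $N_G(v)$ must induce $K_{k+2}$, the neighbours of $v$ lie in a single component of $G'-S'$ for any cut $S'$ of $G'=G-v$, so $S'$ remains a cut of $G$. Because $m(G')\le \frac{k+3}{2}(n-1)+\frac{k-1}{2}$, induction gives a minimum $k$-degenerate cut $S'$ of $G'$ with $|S'|\le k+2=\kappa(G)$, and this $S'$ is the desired cut in $G$. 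A short base-case analysis at $n=k+6$ anchors the induction. Without this inductive reduction (or an equivalent global structural argument), your outline does not reach a proof.
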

Similarly as in \cite{chtazh}, 
an explicit construction shows that Theorem \ref{thmmin}
is best possible.

All proofs are given in the following section.

\section{Proofs}

A {\it $k$-core} in a graph $G$ is a set $C$ of vertices of $G$ 
inducing a subgraph with no vertex of degree at most $k$, 
that is, the minimum degree $\delta(G[C])$ of the subgraph $G[C]$
of $G$ induced by $C$ is at least $k+1$.
Clearly, 
every $k$-core contains at least $k+2$ vertices, 
the only $k$-core of order exactly $k+2$ is a clique, and 
a graph $G$ is $k$-degenerate if and only if it contains no $k$-core.

We proceed to the proof of Theorem \ref{thm:size}.

\begin{proof}[Proof of Theorem \ref{thm:size}]
Let $k$, $G$, and $n$ be as in the statement.
Since $G$ has order at least $k+2$ and no $k$-degenerate cut, 
the minimum degree of $G$ is at least $k+2$.

\begin{claim}\label{claim1}
If $u$ is a vertex of $G$ of degree at most 
$k + \frac{\sqrt{k}}{5}$,
then $u$ has least 
$k-\frac{2k}{25}-\frac{2\sqrt{k}}{5}$ neighbors 
of degree at least $k+\frac{\sqrt{k}}{5}$.
\end{claim}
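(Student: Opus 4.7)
The plan is to argue by contradiction. Suppose $u$ has fewer than $k - \frac{2k}{25} - \frac{2\sqrt{k}}{5}$ neighbors of degree at least $k + \sqrt{k}/5$. Set $A := \{v \in N_G(u) : d_G(v) < k + \sqrt{k}/5\}$ and $B := N_G(u) \setminus A$; then the contrary hypothesis says $|B|$ is small, making $|A|$ correspondingly large. Since $\delta(G) \geq k+2$ (established just before the claim) and $d(u) \leq k + \sqrt{k}/5$, the hypothesis on $u$ is only non-vacuous when $k \geq 100$. Also, $n \geq 2k+2$ makes $V \setminus N_G[u]$ non-empty, so $N_G(u)$ is a cut; as $G$ has no $k$-degenerate cut, $G[N_G(u)]$ must contain a $k$-core $C$, with $k+2 \leq |C| \leq d(u) \leq k + \sqrt{k}/5$. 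Thus $|N_G(u) \setminus C| \leq \sqrt{k}/5 - 2$, and for every $v \in A \cap C$ the $k$-core condition together with $d_G(v) < k + \sqrt{k}/5$ yields $|N_G(v) \setminus N_G[u]| \leq \sqrt{k}/5 - 2$.

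I would then construct a candidate $k$-degenerate cut by swapping out the low-degree core-neighbors $A_0 := A \cap C$ for their outside second-neighbors. Setting $N^* := \bigcup_{v \in A_0}(N_G(v) \setminus N_G[u])$, let
\[
S := (N_G(u) \setminus A_0) \cup N^*.
\]
Because $A_0 \subseteq N_G[u]$ and $N^* \subseteq V \setminus N_G[u]$ are disjoint from $\{u\} \cup A_0$, the set $S$ separates $\{u\} \cup A_0$ from $V \setminus N_G[\{u\} \cup A_0]$, and the latter is non-empty by the order assumption $n \ge 2k+2$. So $S$ is a cut, and its size is bounded by $d(u) + |A_0|(\sqrt{k}/5 - 3)$ using the outside-neighbor bound on $A_0$-vertices.

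To finish it remains to show that $G[S]$ is $k$-degenerate. Suppose instead $G[S]$ has a $k$-core $K$. If $K \cap N^* = \emptyset$, then $K \subseteq N_G(u) \setminus A_0 \subseteq N_G(u)$, so $K$ inherits the $k$-core property in $G[N_G(u)]$ and maximality of $C$ gives $K \subseteq C$. Combined with $K \cap A_0 = \emptyset$ this forces $K \subseteq C \setminus A_0 = C \cap B \subseteq B$, contradicting $|K| \geq k+2 > |B|$ coming from our contrary assumption. If on the other hand $K \cap N^* \neq \emptyset$, I would derive a contradiction from the bounds $|N^*| \leq |A_0|(\sqrt{k}/5 - 2)$, $|A \setminus C| \leq \sqrt{k}/5 - 2$, and $|B| < k - 2k/25 - 2\sqrt{k}/5$ via the $k$-core degree requirement applied to a vertex $w \in K \cap N^*$.

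The main obstacle is this second case. The specific constants $\frac{2k}{25}$ and $\frac{2\sqrt{k}}{5}$ appear to be calibrated precisely so that a hypothetical $k$-core vertex $w \in N^*$ cannot collect $k+1$ neighbors in $K$: the available candidate set $K \cap ((A \setminus C) \cup B \cup N^*)$ is both small (by the contrary assumption on $|B|$ and the bounds above) and sparsely interconnected. Carrying out this edge count with attention to the degree contributions from $K \cap (A \setminus C)$, $K \cap B$, and $K \cap N^*$ is the technical heart of the argument, and is what I expect to be the most delicate step.
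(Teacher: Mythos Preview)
Your approach has two genuine gaps, both stemming from the decision to swap out \emph{all} of $A_0 = A\cap C$ rather than a controlled subset.

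\textbf{Gap 1: $S$ need not be a cut.} You assert that $V\setminus N_G[\{u\}\cup A_0]$ is non-empty ``by the order assumption $n\ge 2k+2$''. But $|\{u\}\cup A_0\cup S| = 1 + d(u) + |N^*|$, and your own bound gives only $|N^*| \le |A_0|\cdot(\sqrt{k}/5 - 2)$. Under the contrary hypothesis there is no useful \emph{upper} bound on $|A_0|$; it can be as large as $|C|$, i.e.\ of order $k$. Hence $|N^*|$ can be of order $k^{3/2}$, far exceeding $2k+2$. So the hypothesis $n\ge 2k+2$ does not ensure anything remains outside $\{u\}\cup A_0\cup S$.

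\textbf{Gap 2: the $k$-core case $K\cap N^*\neq\emptyset$ does not close.} You propose to bound the $K$-degree of some $w\in K\cap N^*$ using the sizes $|A\setminus C|$, $|B|$, $|N^*|$. But $w$ is an arbitrary vertex of $V\setminus N_G[u]$ with no degree constraint whatsoever, and $|N^*|$ itself can be $\Theta(k^{3/2})$. Nothing prevents $G[N^*]$ from containing a $(k+2)$-clique, so a $k$-core sitting entirely inside $N^*\subseteq S$ cannot be ruled out by your counts. The constants in the claim are \emph{not} calibrated to make this edge count work; they arise from a different mechanism.

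The paper's proof sidesteps both issues by removing only a \emph{fixed-size} set $X\subseteq N_G(u)$ with $|X|=\lfloor 2\sqrt{k}/5\rfloor$, chosen to minimize the number of edges from $X$ to $R=V\setminus N_G[u]$. This extremal choice forces a dichotomy: either every vertex of $Z=N_G(u)\setminus X$ already has $\ge\sqrt{k}/5$ neighbors in $R$ (and the claim follows directly), or every vertex of $X$ has fewer than $\sqrt{k}/5$ such neighbors, whence the replacement set $Y=N_G(X)\cap R$ has $|Y|<\tfrac{2k}{25}$. With $|Y|$ bounded and $|Z|\le k-\sqrt{k}/5+1$, the new cut $Y\cup Z$ has size below $n$, and any $k$-core in it must put at least $k+2-|Y|$ vertices into $Z$, each forced to have $>\sqrt{k}/5$ neighbors in $Y\subseteq R$ --- contradicting a case assumption. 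The key idea you are missing is this extremal selection of a \emph{small} swap set, which is what keeps the replacement neighborhood under control.
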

\begin{proof}[Proof of Claim \ref{claim1}.]
Let $u$ be a vertex of $G$ of degree $d_G(u)$ at most $k + \frac{\sqrt{k}}{5}$.
Let $R = V(G) \setminus N_G[u]$.
Let $X$ be a set of $\left\lfloor\frac{2\sqrt{k}}{5}\right\rfloor$ neighbors of $u$
minimizing the number of edges between $X$ and $R$.
Let $Y = N_G(X) \setminus N_G[u]$ and $Z = N_G(u) \setminus X$.
Since $G$ has no $k$-degenerate cut and $n>1+d_G(u)$,
there is a $k$-core $C$ in $G$ contained in $N_G(u)$.

If some vertex in $X$ has at least $\frac{\sqrt{k}}{5}$ 
neighbors in $R$,
then, by the choice of $X$, 
all vertices in $Z$ 
have at least $\frac{\sqrt{k}}{5}$ neighbors in $R$.
Now, each of the 
$$|C\cap Z|\geq |C|-|X|\geq k+2-\left\lfloor \frac{2\sqrt{k}}{5}\right\rfloor$$
vertices in $C\cap Z$ 
has degree at least $k+\frac{\sqrt{k}}{5}$,
which completes the proof of the claim.
Hence, we may assume that every vertex in $X$ has less than 
$\frac{\sqrt{k}}{5}$ neighbors in $R$, which implies
$$|Y|<\frac{\sqrt{k}}{5}\cdot |X|\leq \frac{\sqrt{k}}{5}\cdot \frac{2\sqrt{k}}{5}=\frac{2k}{25}.$$
If $Z$ contains a set $Z'$ of at least $k-\frac{2k}{25}$ vertices 
that have at least $\frac{\sqrt{k}}{5}$ neighbors in $R$,
then each of the 
\begin{eqnarray*}
|C\cap Z'|
&\geq &|C|-|N_G(u)\setminus Z'|\\
&=&|C|-|N_G(u)|+|Z'|\\
&\geq& (k+2)-\left(k+\frac{\sqrt{k}}{5}\right)+\left(k-\frac{2k}{25}\right)\\
&>& k-\frac{2k}{25}-\frac{2\sqrt{k}}{5}
\end{eqnarray*}
vertices in $C\cap Z'$ 
has degree at least $k+\frac{\sqrt{k}}{5}$,
which completes the proof of the claim.
Hence, we may assume that 
\begin{eqnarray}\label{e1}
\mbox{\it $Z$ contains less than $k-\frac{2k}{25}$ vertices 
that have at least $\frac{\sqrt{k}}{5}$ neighbors in $R$.}
\end{eqnarray}
Note that 
$$|Z|=d_G(u)-|X|
\leq k+\frac{\sqrt{k}}{5}-\left\lfloor \frac{2\sqrt{k}}{5} \right\rfloor
\leq k-\frac{\sqrt{k}}{5}+1.$$
Since $G$ has no $k$-degenerate cut
and 
$$1+|X|+|Y|+|Z|
=1+d_G(u)+|Y|
<1+k+\frac{\sqrt{k}}{5}+\frac{2k}{25}
<n,$$
the set $Y\cup Z$ is a cut, and, hence,
there is a $k$-core $C'$ in $G$ contained in $Y\cup Z$.
Since every vertex in $Z$ 
has at most $|Z|-1\leq k-\frac{\sqrt{k}}{5}$
neighbors in $Z$,
every vertex in $C'\cap Z$ 
has at least $k+1-\left(k-\frac{\sqrt{k}}{5}\right)>\frac{\sqrt{k}}{5}$ neighbors in $Y\subseteq R$.
Hence, 
the set $Z$ contains at least 
$$|C'\setminus Y|\geq |C'|-|Y|\geq k+2-\frac{2k}{25}$$
vertices that have at least 
$\frac{\sqrt{k}}{5}$ neighbors in $R$.
This contradicts \eqref{e1}, which completes the proof of the claim.
\end{proof}
Let $V_\ell$ be the set of vertices of $G$ 
of degree at least $k + \frac{\sqrt{k}}{5}$
and let  $V_s = V(G) \setminus V_\ell$.
We use a discharging argument 
to obtain a lower bound on the degree sum of $G$.
Initially, each vertex has charge equal to its degree.
Now, each vertex in $V_\ell$ sends a charge of 
$\frac{5}{38\sqrt{k}}$ to its neighbors in $V_s$.
By Claim \ref{claim1},
every vertex in $V_s$ has at least 
$k-\frac{2k}{25}-\frac{2\sqrt{k}}{5}\geq \frac{13k}{25}$
neighbors in $V_\ell$.
Therefore a vertex in $V_s$ receives from its large degree neighbors 
a total amount of charge of at least 
$\frac{5}{38\sqrt{k}}\cdot\frac{13k}{25}=\frac{13\sqrt{k}}{190}$,
meaning that it has a final charge of at least 
$k + \frac{13\sqrt{k}}{190}$.
On the other hand, a vertex in $V_\ell$ with degree $d$ 
sends a total amount of charge of at most 
$d \cdot \frac{5}{38\sqrt{k}}$, 
implying that it has a final charge of at least
$$d 
\cdot \left(1 - \frac{5}{38\sqrt{k}}\right)
\geq \left( k + \frac{\sqrt{k}}{5}\right)
\cdot \left(1 - \frac{5}{38\sqrt{k}}\right)
=k+\frac{13\sqrt{k}}{190}-\frac{1}{38}.
$$
Summing the total final charge, we obtain 
$$2m
=\sum\limits_{v \in V(G)} d_G(v) 
\geq \left(k+\frac{13\sqrt{k}}{190}-\frac{1}{38}\right)n,$$ 
which completes the proof.
\end{proof}
For a vertex $u$ in a graph $G$ and a set $U$ of vertices of $G$,
let $d_U(u)=|U\cap N_G(u)|$ denote the number of neighbors of $u$ in $U$.

\begin{proof}[Proof of Theorem \ref{thm2}]
Suppose, for a contradiction, that $G$ is a counterexample of minimum order $n$.
In particular, 
the graph $G$ has order $n\geq 5$,
no $2$-degenerate cut
but less than $\frac{27n-35}{10}$ edges.
If $n=5$, then $G$ has less than $\frac{27\cdot 5-35}{10}=10$ edges,
which implies that $G$ has two non-adjacent vertices, say $u$ and $v$.
Now, the set $V(G)\setminus \{ u,v\}$ is a $2$-degenerate cut, which is a contradiction.
Hence, we have $n\geq 6$.
If $G$ has a cut $S$ of order at most $4$, 
then it follows that $S$ has order exactly $4$ and induces a $K_4$.
Let $K$ be a component of $G-S$.
Let $G_1=G[K\cup S]$ and $G_2=G-V(K)$.
Clearly, the two graphs $G_1$ and $G_2$ have orders $n_1$ and $n_2$, 
respectively, at least $5$.
If $G_1$ or $G_2$ has a $2$-degenerate cut $S'$, then, since $S$ is complete,
the set $S'$ is also a $2$-degenerate cut in $G$, which is a contradiction.
Hence $G_1$ and $G_2$ both have no $2$-degenerate cut.
By the choice of $G$, it follows that $G_1$ and $G_2$ have at least 
$\frac{27n_1-35}{10}$ 
and 
$\frac{27n_2-35}{10}$ 
edges, respectively.
This implies that $G$ has at least 
$\frac{27n_1-35}{10}+\frac{27n_2-35}{10}-6>\frac{27n-35}{10}$
edges, which is a contradiction.
Hence, the vertex connectivity $\kappa$ of $G$ is at least $5$.
If $n\leq 17$, 
then the average degree of $G$ is less than $\frac{27\cdot 17-35}{5\cdot 17}<5$.
This implies that the minimum degree, and, hence, also the vertex connectivity of $G$
is at most $4$, which is a contradiction.
Hence, we have $n\geq 18$.
Let $u$ be a vertex of minimum degree in $G$.
Since $5\leq \kappa\leq d_G(u)<\frac{27\cdot n-35}{5\cdot n}\leq \frac{27}{5}$,
the degree of $u$ is exactly $5$.

\begin{claim}\label{claim2}
$\sum\limits_{v\in N_G(u)}d_G(v)\geq 29$.
\end{claim}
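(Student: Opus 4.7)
The plan is to argue by contradiction: assume $\sum_{v\in N_G(u)} d_G(v)\leq 28$. Writing $R = V(G)\setminus N_G[u]$, the fundamental identity
\[
\sum_{v\in N_G(u)} d_G(v) \;=\; 5 \;+\; 2\,e(G[N_G(u)]) \;+\; e(N_G(u),R)
\]
reduces the task to producing good lower bounds on the two terms on the right. Two structural facts drive the analysis. First, since $n\geq 18>6$, the set $N_G(u)$ is a vertex cut of $G$, so $G[N_G(u)]$ contains a $2$-core $C$; as $|N_G(u)|=5$, either $|C|=4$ (in which case $C$ induces $K_4$) or $|C|=5$ (in which case $\delta(G[N_G(u)])\geq 3$). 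Second, $\kappa(G)\geq 5$ together with Menger's theorem, applied to any $w\in R$, produces five internally disjoint $u$--$w$ paths whose second vertices are the distinct vertices of $N_G(u)$ and whose third vertices cannot lie in $N_G(u)$ (they would then be some $v_j$ used twice); hence every $v_i$ has a neighbor in $R$, giving $e(N_G(u),R)\geq 5$.

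I split on $|C|$. If $|C|=5$, then $e(G[N_G(u)])\geq 8$, pushing the sum to at least $26$; the subcase $G[N_G(u)]=K_5$ forces $\sum\geq 25+e(N_G(u),R)\geq 30$, already a contradiction. Otherwise $G[N_G(u)]$ is $K_5$ minus one or two edges, so some $v_i$ has exactly three internal neighbors, and I consider the candidate cut $S=(N_G(u)\setminus\{v_i\})\cup(N_G(v_i)\cap R)$. If $|C|=4$, say $C=\{v_1,v_2,v_3,v_4\}$ is a $K_4$, then maximality of $C$ forces $|N_G(v_5)\cap N_G(u)|\leq 2$, and I consider $S=\{v_2,v_3,v_4,v_5\}\cup(N_G(v_1)\cap R)$. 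In both constructions $S$ genuinely separates $\{u\}$ together with the removed neighbor of $u$ from $R\setminus(N_G(v_i)\cap R)$, which is non-empty because $|R|\geq 12$ while the $R$-neighborhood of a single vertex of $N_G(u)$ is bounded by $d_G(v_i)-4\leq 4$. I then argue that $G[S]$ is $2$-degenerate via an explicit peeling order, starting from the vertex of $S\cap N_G(u)$ of smallest internal degree (it is $v_5$ in the $|C|=4$ case, with internal degree at most $2$), continuing with vertices of $S\cap R$ in order of residual degree, and finishing with the triangle of surviving $C$-vertices.

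The main obstacle will be verifying that this peeling actually succeeds in every allowed configuration: adjoining vertices of $N_G(v_i)\cap R$ to the surviving skeleton could in principle create a new $2$-core intersecting both sides of the cut, for instance through common neighbors in $R$ of several surviving vertices in $N_G(u)$. The decisive leverage is the assumption $\sum d_G(v_j)\leq 28$, which simultaneously caps $|N_G(v_i)\cap R|\leq d_G(v_i)-4$ and the external degrees of the surviving vertices; these bounds limit the sizes of common neighborhoods in $R$ and rule out the appearance of a subgraph of minimum degree three in $G[S]$. Once $G[S]$ is confirmed to be $2$-degenerate, $S$ is a $2$-degenerate cut of $G$, contradicting the hypothesis on $G$ and establishing $\sum_{v\in N_G(u)} d_G(v)\geq 29$.
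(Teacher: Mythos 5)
Your overall strategy---assume $\sum_{v\in N_G(u)}d_G(v)\leq 28$, exploit the $2$-core in $N_G(u)$, and exhibit a $2$-degenerate cut---is the right kind of argument, and your preliminary observations (the identity $\sum d_G(v)=5+2e(G[N_G(u)])+e(N_G(u),R)$, the fact that each $v_i$ has a neighbor in $R$ because otherwise $N_G(u)\setminus\{v_i\}$ is a cut of order $4$) are correct. However, the heart of the proof, namely that your candidate set $S=(N_G(u)\setminus\{v_i\})\cup(N_G(v_i)\cap R)$ is $2$-degenerate, is not established, and as stated the construction fails.

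Two concrete failures. First, in your $|C|=5$ case you remove a vertex $v_i$ with exactly three internal neighbors; but if $G[N_G(u)]=K_5-v_1v_2$ and you remove $v_1$, the surviving four vertices $\{v_2,v_3,v_4,v_5\}$ induce a $K_4$, which is already a $2$-core inside $S$ --- there is no ``triangle of surviving $C$-vertices'' to finish the peeling with. Second, and more fundamentally, the set $N_G(v_i)\cap R$ can contain a $2$-core of its own: the hypothesis $\sum_{v\in N_G(u)}d_G(v)\leq 28$ bounds how many neighbors $v_i$ has in $R$ (up to four, say), but it says nothing about the edges \emph{among} those $R$-vertices, whose degrees are unconstrained; four of them inducing a $K_4$ kills $2$-degeneracy of $S$ outright. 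So the ``decisive leverage'' you invoke does not exist, and the peeling order is never actually specified in a way that survives these configurations. The paper avoids exactly this trap: it only swaps out vertices of $N_G(u)$ that have a \emph{unique} neighbor in $R$ (its set $X$), so each swap replaces one vertex by one vertex, the candidate cut keeps order $5$, and its adjacencies can be controlled explicitly (distinctness of the $w_i$, non-adjacency of $w_i$ to the other swapped-out vertices) to verify $2$-degeneracy case by case. To repair your argument you would need to restrict attention to such single-vertex replacements and redo the degree counting around the size of $X$, which is essentially the paper's proof.
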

\begin{proof}[Proof of Claim \ref{claim2}.]
Let $N_G(u)=\{ v_1,v_2,v_3,v_4,v_5\}$.
Let $X$ be the set of vertices in $N_G(u)$ with at most one neighbor outside of $N_G[u]$.
Let $\ell=|X|$. 
By symmetry, we may assume that $X=\{v_1,\dots,v_\ell\}$.
If some vertex $v\in X$ has no neighbor 
outside of $N_G[u]$, 
then $N_G(u)\setminus \{ v\}$ is a cut of order $4$, 
which is a contradiction.
If two vertices $v,v'\in X$ have the same neighbor $w$
outside of $N_G[u]$, 
then $(N_G(u)\setminus \{ v,v'\})\cup\{ w\}$ is a cut of order $4$, 
which is a contradiction.
Hence, the vertices $v_1,\dots, v_\ell$ have distinct unique neighbors 
$w_1,\dots,w_\ell$ outside of $N_G[u]$, respectively.
Since $N_G(u)$ is not a $2$-degenerate cut,
the set $N_G(u)$ contains a $2$-core.
We consider two cases.

\bigskip

\noindent {\bf Case 1.} {\it $G[N_G(u)]$ has minimum degree at least $3$.}

\bigskip

\noindent This implies that at least $8$ edges have both endpoints in $N_G(u)$.
If $|X|\geq 4$, 
then $\{v_1,v_2,w_3,w_4,v_5\}$ is a $2$-degenerate cut, which is a contradiction.
If $|X|\leq 2$, then 
$$\sum\limits_{v\in N_G(u)}d_G(v)\geq 5+ 2\cdot 8+3\cdot 2+2=29.$$
Hence, we may assume that $|X|=3$.
If $v_1$ is not adjacent to $v_2$ or $v_4$, 
then $\{v_1,v_2,w_3,v_4,v_5\}$ is a $2$-degenerate cut, which is a contradiction.
By symmetry, this implies that $\{ v_1,v_2,v_3\}$ is complete
and that there are all possible $6$ edges between $\{ v_1,v_2,v_3\}$ and $\{ v_4,v_5\}$.
It follows that at least $9$ edges have both endpoints in $N_G(u)$ and, hence,
$$\sum\limits_{v\in N_G(u)}d_G(v)\geq 5+ 2\cdot 9+2\cdot 2+3=30.$$

\bigskip

\noindent {\bf Case 2.} {\it $G[N_G(u)]$ has minimum degree at most $2$.}

\bigskip

\noindent Since $N_G(u)$ contains a $2$-core,
we may assume, by symmetry, 
that $v_1$, $v_2$, $v_3$, and $v_4$ induce a $K_4$, 
and that $v_5$ has at most two neighbors in $N_G(u)$.
Note that $v_5\not\in X$.
If $d_{N_G(u)}(v_5)\geq \ell$, then 
\begin{eqnarray*}
\sum\limits_{v\in N_G(u)}d_G(v) &\geq & 
5+(12+2\cdot d_{N_G(u)}(v_5))+2(4-\ell)+\ell+(4-d_{N_G(u)}(v_5))\\
& = &  
29+d_{N_G(u)}(v_5))-\ell
\geq 29.
\end{eqnarray*}
Hence, we may assume $d_{N_G(u)}(v_5)<\ell$.
If $d_{N_G(u)}(v_5)=1$, then $\ell\geq 2$ 
and $\{w_1,v_2,v_3,v_4,v_5\}$ is a $2$-degenerate cut, which is a contradiction.
If $d_{N_G(u)}(v_5)=2$, then $\ell\geq 3$.
By symmetry, we may assume that $v_5$ and $v_1$ are adjacent. 
Again, the set $\{w_1,v_2,v_3,v_4,v_5\}$ is $2$-degenerate cut, which is a contradiction.
If $d_{N_G(u)}(v_5)=0$ and $\ell\geq 2$, then  
$\{w_1,v_2,v_3,v_4,v_5\}$ is a $2$-degenerate cut, which is a contradiction.
Hence, we may assume that $d_{N_G(u)}(v_5)=0$ and $\ell=1$.
If $v_2$ is not adjacent to $w_1$, then
$\{w_1,v_2,v_3,v_4,v_5\}$ is a $2$-degenerate cut, which is a contradiction.
Hence, by symmetry, we may assume that $w_1$ is adjacent to $v_2$, $v_3$, and $v_4$.
If $v_2$ has more than $2$ neighbors outside of $N_G[u]$, the claim follows.
Hence, by symmetry, we may assume that $v_2$, $v_3$, and $v_4$
all have exactly one neighbor $w_2$, $w_3$, and $w_4$ outside of $N_G[u]\cup \{ w_1\}$,
respectively.
If $w_2=w_3$, then $\{ w_1,w_2,v_4,v_5\}$ is a cut of order $4$,
which is a contradiction.
Hence, by symmetry, the vertices $w_2$, $w_3$, and $w_4$ are distinct.
Now, the set $\{ w_1,w_2,v_3,v_4,v_5\}$ is a $2$-degenerate cut, which is a contradiction.
This completes the proof of the claim.
\end{proof}
Let $V_i$ be the set of vertices of $G$ of degree $i$ for $i\in\{5,6,7,8\}$, and
let $V_9$ be the set of vertices of $G$ of degree at least $9$.
We use a discharging argument 
to obtain a lower bound on the degree sum of $G$.
Initially, each vertex has charge equal to its degree.
Then each vertex $u$ in $V_i$ for $i\in \{ 6,7,8,9\}$
sends a charge of $\frac{i-5}{10}$ to each neighbor.
By Claim \ref{claim2}, a vertex $u$ in $V_5$ 
receives at least $4$ times $\frac{1}{10}$ charge from its neighbors.
This implies that the final charge of $u$ is at least $\frac{27}{5}$. 
The final charge of a vertex $u$ in $V_6\cup V_7\cup V_8$ 
is at least $d_G(u)\left(1-\frac{d_G(u)-5}{10}\right)\geq \frac{27}{5}$.
The final charge of a vertex $u$ in $V_9$ is at least 
$9\left(1-\frac{2}{5}\right)=\frac{27}{5}$.
Summing the total final charge, we obtain 
$2m=\sum\limits_{v \in V(G)} d_G(v) 
\geq \frac{27n}{5},$ 
which is a contradiction and completes the proof.
\end{proof}
Our next goal is the proof of Theorem \ref{thmmin},
which requires some preparatory results.
Recall that every vertex in a minimum cut $S$ of a graph $G$ 
has a neighbor in each component of $G-S$.
For a graph $G$, let $\kappa(G)$ denote the vertex connectivity of $G$.

\begin{lemma}\label{lemma2}
Let be an integer at least $2$.
If $G$ is a connected graph of 
order $n$ at least $k+6$ vertices, 
size $m$ at most $\frac{k+3}{2}n+\frac{k-1}{2}$,
and minimum degree exactly $k+3$, 
then $G$ has a minimum $k$-degenerate cut. 
\end{lemma}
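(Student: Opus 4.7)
The approach is to case-split on $\kappa:=\kappa(G)$, which satisfies $\kappa\leq\delta(G)=k+3$. The easy case is $\kappa\leq k+1$: any minimum cut then has at most $k+1$ vertices, so its induced subgraph has maximum degree at most $k$ and is automatically $k$-degenerate. Hence we may assume $\kappa\in\{k+2,k+3\}$. A key auxiliary fact from the hypothesis $2m\leq(k+3)n+(k-1)$ together with $\delta(G)=k+3$ is that the degree excess $\sum_v(d_G(v)-(k+3))=2m-(k+3)n$ is at most $k-1$, so at most $k-1$ vertices of $G$ have degree exceeding $k+3$.

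For $\kappa=k+2$, a minimum cut $S$ has size $k+2$ and fails to be $k$-degenerate precisely when $G[S]=K_{k+2}$. I would assume for contradiction that every minimum cut of $G$ induces $K_{k+2}$, fix such an $S$ with components $A_1,\ldots,A_t$ of $G-S$, and use that each $s\in S$ has a neighbor in every $A_i$ to deduce $d_G(s)\geq k+1+t$. The tight degree-excess budget should force $t=2$ and pin down the neighbor pattern of almost every $s\in S$. I would then try to produce a different minimum cut by a swap $(S\setminus\{s\})\cup\{a\}$ for a well-chosen $s\in S$ and $a\in A_1$, arguing that it has a non-edge and is therefore $k$-degenerate; failing that, an edge-count comparison between the two sides $G[S\cup A_1]$ and $G[S\cup A_2]$ should contradict the global edge bound.

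For $\kappa=k+3=\delta(G)$, pick a vertex $u$ with $d_G(u)=k+3$, so $S:=N_G(u)$ is a minimum cut. If $S$ is $k$-degenerate we are done; otherwise $G[S]$ contains a $k$-core $C$ of size $k+2$ or $k+3$. When $|C|=k+2$ we must have $C=K_{k+2}$ and $G[\{u\}\cup C]=K_{k+3}$; when $|C|=k+3$, every vertex of $N_G(u)$ has at least $k+1$ neighbors inside $N_G(u)$. In either subcase every $v\in C$ has at least $k+2$ neighbors inside $N_G[u]$, so if $d_G(v)=k+3$ then $v$ has exactly one neighbor in $V(G)\setminus N_G[u]$; the small degree-excess budget forces this for all but at most $k-1$ vertices of $C$. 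The plan is to exhibit a vertex $w\in V(G)\setminus N_G[u]$ and a vertex $v\in C$ so that $S':=(N_G(u)\setminus\{v\})\cup\{w\}$ is again a minimum cut and $G[S']$ no longer contains a $k$-core.

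The main obstacle lies in the case $\kappa=k+3$: a naive edge count around a single $u$ only reproduces the trivial bound $2m\geq(k+3)n$, so the slender improvement by $\frac{k-1}{2}$ has to be extracted from the heavy overlap of the many minimum-degree neighborhoods $N_G(u)$ together with the tiny degree-excess budget. Identifying the correct swap $v\leftrightarrow w$, verifying that the resulting $S'$ is still a minimum cut of size $k+3$, and checking that $G[S']$ is genuinely $k$-degenerate, is where I expect the bulk of the technical work to lie.
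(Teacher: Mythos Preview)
Your plan matches the paper's proof: same case split on $\kappa$, same degree-excess observation forcing all but at most $k-1$ vertices of any minimum cut $S$ to have degree exactly $k+3$, and the same swap strategy. Two small differences in execution are worth noting. First, the paper works with an \emph{arbitrary} minimum cut $S$ rather than $N_G(u)$; this makes the two connectivity cases more parallel and avoids privileging the component $\{u\}$. Second, in the sub-case where $|S|=k+3$ and $S$ is itself a $k$-core (your $|C|=k+3$), the paper performs a \emph{double} swap $(S\setminus\{v_1,v_2\})\cup\{w_1,w_2\}$ rather than a single one: once two vertices are removed from $S$ only $k+1$ remain, so no $k$-core can survive there, and it suffices to arrange $d_S(w_1),d_S(w_2)\leq k$; this follows from the edge count $\sum_{i=1}^4 d_S(w_i)\leq 2k+2$, which is exactly the degree-excess budget applied to the edges from $S$ into the larger side. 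A single swap can also be made to work but requires the extra verification that $S\setminus\{v\}$ is not a clique. Finally, your fallback ``edge-count comparison between the two sides'' for $\kappa=k+2$ is unnecessary: the swap succeeds directly, because the unique outside neighbour $w$ of any $v\in X\cap S$ cannot be adjacent to another $v'\in X\cap S$ (else $(S\setminus\{v,v'\})\cup\{w\}$ would be a smaller cut), so $w$ already has a non-neighbour in $(S\setminus\{v\})\cup\{w\}$.
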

\begin{proof}
Let $G$ be as in the statement. 
If $\kappa(G)\leq k+1$, 
then every minimum cut of $G$ is $k$-degenerate. 
Hence, we may assume that $\kappa(G)\in\{k+2,k+3\}$. 
Let $X$ be the set of all vertices of minimum degree in $G$. 
We have
\begin{eqnarray*}
(k+3)n+\sum\limits_{u\in V(G)\setminus X}\Big(d_G(u)-(k+3)\Big)
& = & 2m
\leq (k+3)n+(k-1),
\end{eqnarray*}
which implies
\begin{eqnarray}\label{emin1}
n-|X|&\leq &\sum\limits_{u\in V(G)\setminus X}\Big(d_G(u)-(k+3)\Big) \leq k-1\mbox{ and, hence,}\label{emin1}\\
|X| &\geq & n-(k-1).\label{emin2}
\end{eqnarray}
Let $S$ be a minimum cut of $G$ and suppose that $S$ is not $k$-degenerate. 
Since $|S|\in\{k+2,k+3\}$,
it follows from \eqref{emin2}
that $|X\cap S|\geq |S|-(k-1)\geq 3$. 
Since $S$ contains a $k$-core, 
there is a vertex $v$ in $X\cap S$ 
such that $v$ has at least $k+1$ neighbors in $S$ 
and, hence, at most two neighbors outside of $S$. 
Since $v$ has a neighbor in each component of $G-S$,
the graph $G-S$ has exactly two components $G_1$ and $G_2$. 
We may assume that $n(G_1)\geq n(G_2)$, 
which implies $n(G_1)\geq 2$ as $n\geq k+6$.

We consider distinct cases.

\bigskip

\noindent {\bf Case 1.} {\it $\kappa(G)=k+2$.}

\bigskip

\noindent In this case, we have $G[S]=K_{k+2}$. 
Let $v\in X\cap S$. 
The vertex $v$ has exactly one neighbor $w$ in $V(G_1)$. 
If $w$ has a neighbor $v'$ in $X\cap S$ distinct from $v$,
then $w$ is the unique neighbor of $v'$ in $V(G_1)$,
which implies the contradiction that 
$(S\setminus \{ v,v'\})\cup \{ w\}$
is a cut that is smaller than $S$.
Since $X\cap S$ contains at least $3$ vertices,
the vertex $w$ has a non-neighbor in $S$.
It follows $(S\setminus \{ v\})\cup \{ w\}$ is a minimum cut that is not complete, 
and, hence, $k$-degenerate cut,
which completes the proof in this case.

\bigskip

\noindent {\bf Case 2.} {\it $\kappa(G)=k+3$ and $S$ is a $k$-core.}

\bigskip

\noindent In this case, we have $|X\cap S|\geq 4$.
Let $v_1,v_2,v_3,v_4\in X\cap S$. 
By the same argument as in Case 1, 
for each $i\in \{ 1,2,3,4\}$,
the vertex $v_i$ has exactly one neighbor $w_i$ in $V(G_1)$ and 
the vertex $w_i$ has no neighbor in $(X\cap S)\setminus \{ v_i\}$. 
In particular, the vertices $w_1,w_2,w_3,w_4$ are distinct,
which implies $n(G_1)\geq 4$. 
Counting the edges between $\{ w_1,w_2,w_3,w_4\}$ and $S$, we obtain
\begin{eqnarray*}
d_S(w_1)+d_S(w_2)+d_S(w_3)+d_S(w_4)
& \leq &
\sum\limits_{v\in S}\Big(d_G(v)-\underbrace{(k+1)}_{\leq d_S(v)}-\underbrace{1}_{\leq d_{V(G_2)}(v)}\Big)\\
&=& |S|+\sum\limits_{v\in S\setminus X}\Big(d_G(v)-(k+3)\Big)\\
&\stackrel{\eqref{emin1}}{\leq}& (k+3)+(k-1) =2k+2.
\end{eqnarray*}
By symmetry, we may assume that $d_S(w_1),d_S(w_2)\leq k$,
which implies that $w_1$ and $w_2$ both have at most $k-1$ neighbors 
in $S\setminus \{ v_1,v_2\}$.
Now, the set $(S\setminus \{ v_1,v_2\})\cup \{ w_1,w_2\}$
is a minimum cut that is $k$-degenerate,
which completes the proof in this case.

\bigskip

\noindent {\bf Case 3.} {\it $\kappa(G)=k+3$ and $S$ is not a $k$-core.}

\bigskip

\noindent Again, we have $|X\cap S|\geq 4$.
Since $S$ has order $k+3$, is not a $k$-core, but contains a $k$-core,
there is a vertex $u$ in $S$ such that $d_S(u)\leq k$
and $S\setminus \{ u\}$ is a clique.
Since each vertex in $S\setminus \{ u\}$ has 
a neighbor in $V(G_1)$,
a neighbor in $V(G_2)$, and 
$k+1$ neigbors in $S\setminus \{ u\}$,
the at least $3$ vertices in $(X\cap S)\setminus \{ u\}$ are not adjacent to $u$.
In particular, we have $d_S(u)\leq k-1$.
Let $v\in (X\cap S)\setminus \{ u\}$.
As before, it follows that the vertex $v$ has a unique neighbor $w$ in $V(G_1)$,
and $w$ has no neighbor in $(X\cap S)\setminus \{ u,v\}$.
It follows that $(S\setminus \{ v\})\cup \{ w\}$
is a minimum cut that is $k$-degenerate,
which completes the proof in this final case.
\end{proof}
We are now in a position to complete the proof of Theorem \ref{thmmin}.

\begin{proof}[Proof of Theorem \ref{thmmin}.]
Suppose, for a contradiction, that the theorem is false.
Let $G$ be a counterexample of minimum order $n$,
that is, $n\geq k+6$,
the size $m$ of $G$ is at most $\frac{k+3}{2}n+\frac{k-1}{2}$
but $G$ has no minimum $k$-degenerate cut,
which implies that $\kappa(G)\geq k+2$.
The average degree $\frac{2m}{n}$ of $G$ satisfies 
$\frac{2m}{n}\leq k+3+\frac{k-1}{n}<k+4$,
which implies $\delta(G)\leq k+3$.
Lemma \ref{lemma2} implies $\delta(G)\leq k+2$.
Since $\kappa(G)\leq \delta(G)$, we obtain $\kappa(G)=\delta(G)=k+2$.
Let $v$ be a vertex of minimum degree.
The neighborhood $S=N_G(v)$ of $v$ is a minimum cut,
which implies $G[S]=K_{k+2}$.

First, suppose that $n=k+6$. 
Let $R=\{ x,y,z\}=V(G)\setminus N_G[v]$.
The number $m'$ of edges incident with $\{ x,y,z\}$ satisfies
\begin{eqnarray}\label{eq1}
m'\leq \frac{(k+3)(k+6)}{2}+\frac{k-1}{2}-{k+3\choose 2} 
= \frac{5}{2}k+\frac{11}{2}
\stackrel{k\geq 2}{<}3k+5.
\end{eqnarray}
If $G[R]$ has at most one edge, then $m'\geq 3\delta(G)-1=3(k+2)-1$,
which contradicts \eqref{eq1}.
If $G[R]$ contains at exactly two edges, 
say $xy$ and $xz$,
then \eqref{eq1} implies that $x$ has degree $k+2$,
and $N_G(x)$ is a minimum $k$-degenerate cut,
which is a contradiction.
It follows that $R$ is complete.
By \eqref{eq1}, we may assume that 
$d_G(x)=d_G(y)=k+2$
and $d_G(z)\leq k+3$.
Since $N_G(x)$ is not a $k$-degenerate cut,
$N_G(x)$ is complete, which implies 
$N_G(x)\cap S=N_G(y)\cap S\subseteq N_G(z)\cap S$.
Now, the set $N_G(z)\cap S$ is a cut of order at most $k+1$,
which contradicts $\kappa(G)=k+2$.
Hence, we may assume that $n>k+6$.

The graph $G'=G-v$ has order $n'=n-1\geq k+6$ 
and size $m'=m-(k+2)<\frac{k+3}{2}(n-1)$.
By the choice of $G$, the graph $G'$ has a minimum $k$-degenerate cut $S'$.
Since $|S'|\leq \kappa(G')\leq \delta(G')\leq k+2=\kappa(G)$
and $S$ is complete,
the set $S'$ is also a minimum $k$-degenerate cut in $G$.
This final contradiction completes the proof.
\end{proof}
We construct graphs showing that Theorem \ref{thmmin} is best possible.
For integers $k\geq 2$ and $s\geq 3$, 
let $G$ arise from the union of $s$ disjoint $(k+2)$-cliques 
$C_0,\ldots,C_{s-1}$ arranged in a cyclic order by
\begin{itemize}
\item adding sets $M_0,\ldots,M_{s-1}$ of edges,
where $M_i$ is a matching of size $k+2$ between $C_i$ and $C_{i+1}$, and
\item adding one further vertex $u$ 
as well as all $k+2$ possible edges between $u$ and $C_0$.
\end{itemize}
See Figure \ref{fig1} for an illustration.
The only minimum cut of $G$ is $N_G(u)=C_0$, which is not $k$-degenerate.
The order of $G$ is $n=(k+2)s+1$
and the size is $m=\left(\frac{k+3}{2}n+\frac{k-1}{2}\right)+1$.

\begin{figure}[H]
    \centering
    \begin{tikzpicture}[scale=1]

    \def\l{12}    
    \def\m{6}    
    \def\r{3}    
    \def\rr{0.55} 

    \foreach \i in {1,...,\l} {
        \coordinate (C\i) at ({90 - 360/\l*(\i-1)}:\r);

        \foreach \j in {1,...,\m} {
            \coordinate (v\i-\j) at
            ($(C\i) + ({90 - 360/\m*(\j-1)}:\rr)$);
            \fill (v\i-\j) circle (0.035);
        }

        \foreach \j in {1,...,\m} {
            \foreach \k in {\j,...,\m} {
                \ifnum\j<\k
                    \draw (v\i-\j) -- (v\i-\k);
                \fi
            }
        }
    }

    \foreach \i in {1,...,\l} {
        \pgfmathtruncatemacro{\next}{mod(\i,\l)+1}
        \foreach \j in {1,...,\m} {
            \draw (v\i-\j) -- (v\next-\j);
        }
    }

    \coordinate (u) at ($(C1) + (0,1.1)$);
    \fill[red] (u) circle (0.045);
    \node[right] at (u) {$u$};

    \foreach \j in {1,...,\m} {
    \draw[thick] (u) -- (v1-\j);
    }

    \end{tikzpicture}
\caption{The constructed graph $G$ for $k=4$, $s=12$, 
and a specific choice of the matchings $M_i$.}\label{fig1}
\end{figure}
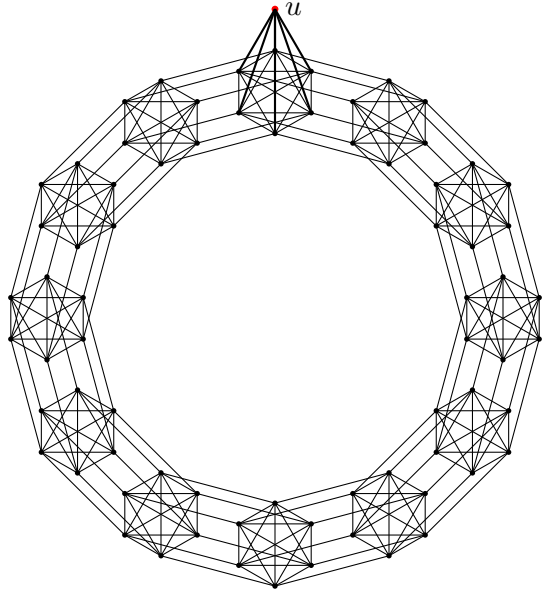

\paragraph{Acknowledgment} 
The authors were partially supported by
the Deutsche Forschungsgemeinschaft 
(DFG, German Research Foundation) -- project number 545935699.

\end{document}